\newcommand{\cC}{\ensuremath{\mathcal{C}}}
\newcommand{\cM}{\ensuremath{\mathcal{M}}}
\newcommand{\cN}{\ensuremath{\mathcal{N}}}
\newcommand{\cS}{\ensuremath{\mathcal{S}}}
\newcommand{\cV}{\ensuremath{\mathcal{V}}}
\newcommand{\s}{\ensuremath{\mathcal{S}}}
\newcommand{\N}{\ensuremath{\mathbb{N}}}
\newcommand{\Vcal}{\ensuremath{\mathcal{V}}}
\newcommand{\Tcal}{\ensuremath{\mathcal{T}}}
\newcommand{\Fcal}{\ensuremath{\mathcal{F}}}
\newcommand{\Xcal}{\ensuremath{\mathcal{X}}}
\newcommand{\Kcal}{\ensuremath{\mathcal{K}}}
\newcommand{\inv}{^{-1}}
\newcommand{\sub}{\subseteq}
\newcommand{\sm}{\smallsetminus}
\newcommand{\es}{\emptyset}
\newcommand{\Aut}{\textnormal{Aut}}
\newcommand{\fwd}{\overset{\scriptscriptstyle\rightarrow}}
\newcommand{\bwd}{\overset{\scriptscriptstyle\leftarrow}}
\DeclareMathOperator{\coloneq}{:\!=}
\theoremstyle{definition}
\theoremstyle{plain}
\newtheorem{conj}{Conjecture}[section]
\newtheorem{lem}[conj]{Lemma}
\newtheorem{thm}[conj]{Theorem}
\newtheorem{prop}[conj]{Proposition}
\theoremstyle{remark}
\newtheorem{remark}[conj]{Remark}
\def\td{tree-decom\-po\-si\-tion}
\def\ta{tree amalgamation}
\def\qt{quasi-tran\-si\-tive}
\def\lf{locally finite}
\newcommand{\comment}[1]{}
\def\?#1{\vadjust{\vbox to 0pt{\vss\vskip-8pt\leftline{%
     \llap{\hbox{\vbox{\pretolerance=-1
     \doublehyphendemerits=0\finalhyphendemerits=0
     \hsize16truemm\tolerance=10000\small
     \lineskip=0pt\lineskiplimit=0pt
     \rightskip=0pt plus16truemm\baselineskip8pt\noindent
     \hskip0pt        
     #1\endgraf}\hskip7truemm}}}\vss}}}
\newenvironment{txteq}
{
	\begin{equation}
	\begin{minipage}[t]{0.85\textwidth} 
	\em                                
}
{\end{minipage}\end{equation}\ignorespacesafterend}
\newenvironment{txteq*}
{
	\begin{equation*}
	\begin{minipage}[t]{0.85\textwidth} 
	\em                                
}
{\end{minipage}\end{equation*}\ignorespacesafterend}
\begin{document}

\title{Two characterisations of accessible quasi-transitive graphs}
\author{Matthias Hamann\thanks{Supported by the Heisenberg-Programme of the Deutsche Forschungsgemeinschaft (DFG Grant HA 8257/1-1).}\\Mathematics Institute, University of Warwick\\Coventry, UK

\and
Babak Miraftab\\Department of Mathematics, Universit\"at Hamburg\\Hamburg, Germany}

\date{}

\maketitle

\begin{abstract}
We prove two characterisations of accessibility of locally finite \qt\ connected graphs.
First, we prove that any such graph $G$ is accessible if and only if its set of separations of finite order is an $\Aut(G)$-finitely generated semiring.
The second characterisation says that $G$ is accessible if and only if every process of splittings in terms of \ta s stops after finitely many steps.
\end{abstract}

\section{Introduction}

Tree amalgamations are a graph product that offers a way to construct graphs that are, in general, multi-ended.
(We refer to Section~\ref{sec_ta} for its definition.)
On the other hand, every suitable multi-ended graph can be written as a non-trivial \ta, see Theorem~\ref{thm_StallingsGraph}.
Note that \ta s are a graph theoretic analogue of the following two group products: free products with amalgamation and HNN-extensions.
Also, Theorem~\ref{thm_StallingsGraph} is a graph theoretic version of Stallings' splitting theorem of finitely generated groups~\cite{Stallings}.

\begin{thm}\label{thm_StallingsGraph}{\rm \cite[Theorem 5.5]{HLMR}}
Every multi-ended \qt\ locally finite connected graph is a non-trivial \ta\ of two \qt\ locally finite connected graphs of finite adhesion and finite identification, distinguishing ends and respecting the action of the involved groups.
\end{thm}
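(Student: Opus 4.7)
The plan is to produce the desired tree amalgamation from a well-chosen $\Aut(G)$-invariant nested system of finite separations of $G$ that distinguishes two ends, and then read off the structure tree of this system as the decomposition tree of the tree amalgamation. Because this is the graph-theoretic analogue of Stallings' theorem, I follow the same outline that Dunwoody used to geometrise Stallings' theorem, adapted to the quasi-transitive graph setting rather than Cayley graphs.

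First, since $G$ is multi-ended and locally finite, there exists a finite separation $(A,B)$ of $G$ whose two sides each contain an end. Among all such separations I would take one of minimum order. Its $\Aut(G)$-orbit $\mathcal{S}$ then consists of finite separations of uniformly bounded order whose translates still separate ends. The next step is the heart of the argument: replace $\mathcal{S}$ by a nested $\Aut(G)$-invariant family $\mathcal{S}'$ of finite separations. Here one applies a Dunwoody-type uncrossing procedure to the minimum-order separations; local finiteness and the bound on the order ensure that the uncrossing terminates and yields a nested family, and quasi-transitivity guarantees that only finitely many orbits of separations are produced. The family $\mathcal{S}'$ must be chosen to remain end-distinguishing, which is automatic once the original separation was.

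From $\mathcal{S}'$ I would form its structure tree $T$ in the style of Dunwoody--Kr\"on: the nodes are the "parts" of $G$ cut out by $\mathcal{S}'$ (equivalence classes of vertices not separated by any cut in $\mathcal{S}'$, together with the finite separators themselves), and the edges correspond to the elements of $\mathcal{S}'$. The group $\Aut(G)$ acts on $T$, and since $G$ is quasi-transitive and $\mathcal{S}'$ consists of finitely many orbits of separations, the action on $T$ has finitely many edge orbits. After a suitable bipartition of $T$ so that there are exactly two orbits of node types connected along one orbit of edges, the resulting bi-regular tree with a decoration by two subgraphs of $G$ and an identifying subgraph on each edge is precisely the data of a tree amalgamation, and the original graph $G$ is recovered as the tree amalgamation of the two factor subgraphs.

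Finally, I would verify the four quality properties. Finite adhesion is immediate because the separators have bounded finite order. Finite identification follows because each node stabiliser in $\Aut(G)$ acts with finitely many orbits on the edges of $T$ incident with that node, bounding the number of distinct identifying subgraphs at a single copy. Quasi-transitivity and local finiteness of the two factors come from the quasi-transitive action of the setwise stabilisers of the two node orbits in $T$ on the corresponding subgraphs of $G$. End-distinguishing holds by the choice of the original separation and the nestedness of $\mathcal{S}'$. I expect the main obstacle to be the Dunwoody uncrossing step: producing a nested, still $\Aut(G)$-invariant family of finite separations that retains the end-distinguishing property while keeping the number of orbits finite is the technically delicate piece; once the structure tree is in hand, the remaining verifications are essentially bookkeeping.
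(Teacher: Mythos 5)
The statement you are proving is not actually proved in this paper: it is imported verbatim as \cite[Theorem~5.5]{HLMR}, so there is no in-paper argument to compare against. That said, your outline does follow the general Dunwoody--Kr\"on strategy that underlies the HLMR proof (extract a minimal end-separating finite separation, pass to a nested $\Aut(G)$-invariant system, form a structure tree or, in the HLMR formulation, an $\Aut(G)$-invariant tree-decomposition with finitely many edge orbits, and read off the tree amalgamation from one orbit of edges). So at the level of a sketch the route is the right one.

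Two places where the proposal is too loose to count as a proof. First, you handle only the Type~1 case. After restricting to a single $\Aut(G)$-orbit of edges of the structure tree you may get either two node orbits joined along that edge orbit (the ``amalgamated product'' case you describe) or a single node orbit with the edge orbit mapped to itself by an inversion or a translation (the HNN-type case). The paper's definition of \ta\ splits exactly into Type~1 and Type~2 for this reason, and Proposition~\ref{prop_NiceSplitting} shows the inversion case needs a separate repair; your ``suitable bipartition'' sentence glosses over this entirely. Second, your justification of finite identification is not correct as stated: knowing that a node stabiliser has finitely many orbits on incident edges does not bound $|V(T_x)|$ for a vertex $x$, since $T_x$ could be an infinite subtree even when the tree is locally finite. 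What one actually needs is that the tree-decomposition one constructs has connected parts, finite adhesion, and no node with a redundant (duplicated) part, so that for a locally finite $G$ the subtree of nodes containing a fixed vertex is both locally finite and of bounded radius; this is a genuine lemma in HLMR (of the flavour of their Proposition~4.8), not bookkeeping. You also identify the uncrossing step as the delicate one, which is fair, but you should note that the end-distinguishing property is not ``automatic'' after uncrossing: one has to check that the nested family still separates the two given ends, which uses that the chosen separations have minimum order among end-separating ones (so corner separations of strictly smaller order cannot separate ends, forcing crossings to resolve favourably).
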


When $G$ is a \ta\ of $G_1$ and $G_2$ with all properties as in Theorem~\ref{thm_StallingsGraph}, then we say that $(G_1,G_2)$ is a \emph{factorisation} of~$G$ and $G$ \emph{splits} into $G_1$ and~$G_2$.
More generally, a tuple $(G_1,\ldots,G_i)$ of \qt\ locally finite connected graphs is a \emph{factorisation} of~$G$ if $G$ is obtained by iterated non-trivial \ta s of finite adhesion, finite identification and respecting the actions of the involved groups of all these graphs~$G_i$.
A factorisation is \emph{terminal} if all its graphs have at most one end.
We call a graph \emph{accessible} if it has a terminal factorisation.

The question arises which \qt\ locally finite connected graphs are accessible.
A result of~\cite{HLMR} says that such graphs are accessible if and only if they are accessible in the sense of Thomassen and Woess~\cite{ThomassenWoess}.
(We refer to Section~\ref{sec_ta} for their definition of accessibility.)
By examples of Dunwoody~\cite{D-AnInaccessibleGroup,D-AnInaccessibleGraph}, it is known that there are inaccessible \qt\ locally finite connected graphs.
We are looking for characterisation results for accessibility and in this paper we are going to prove two such results.

The first result deals with the set $\s(G)$ of all separations of finite order of \qt\ locally finite connected graphs~$G$.
(For the definition of separations and related notions, we refer to Section~\ref{sec_Process}.)
This set equipped with two natural operations is a semiring and the automorphisms of~$G$ induce an action on~$\s(G)$.
We prove that $G$ is accessible if and only if there are finitely many separations in $\s(G)$ that generate together with their $\Aut(G)$-images the whole semiring $\s(G)$.
We then say that $\s(G)$ is \emph{$\Aut(G)$-finitely generated}.
This characterisation can be considered as a result analogous to~\cite[Corollary IV.7.6]{dicksdun} for the set of separations instead of the cut space.

Before we explain the second characterisation, let us look at factorisations once more.
If $(G_1,G_2)$ is a factorisation of~$G$, we may ask if one of these factors has again more than one end.
If so, we can apply Theorem~\ref{thm_StallingsGraph} to that factor (and the stabiliser of that factor in $\Aut(G)$ as group acting \qt ly on it) and obtain a factorisation of it.
We can repeat this \emph{process of splittings} as long as there are factors with more than one end.
It is clear from the definition that some process of splittings stops if and only if the graph has a terminal factorisation and thus is accessible.
It was conjectured in \cite[Conjecture 6.5]{HLMR} that the property of stopping of the process of splittings does not depend on the particular splittings.
To be precise, it was conjectured that one process of splittings stops after finitely many steps if and only if every process does this.
Our second characterisation confirms this in a strong sense.

\begin{thm}\label{mainthmShort}
Let $G$ be a \qt\ locally finite connected graph. 
Then the following statements are equivalent:
\begin{enumerate}[\rm (i)]
\item\label{itm_mainShort_Acc} $G$ is accessible.
\item\label{itm_mainShort_Semiring} $\cS(G)$ is an $\Aut(G)$-finitely generated semiring.
\item\label{itm_mainShort_Process} Every process of splittings of $G$ must end after finitely many steps.
\item\label{itm_mainShort_Process2} There exists $\kappa(G)\in\N$ such that every process of splittings of~$G$ stops after $\kappa(G)$ steps.
\end{enumerate}	
\end{thm}

Our paper is structured as follows.
In Section~\ref{sec_ta} we are going to define \ta s and all related notions.
In Section~\ref{sec_semiring}, we investigate the semiring $\s(G)$ and prove a major step for the equivalence of~(\ref{itm_mainShort_Acc}) and~(\ref{itm_mainShort_Semiring}) of Theorem~\ref{mainthmShort}.
In Section~\ref{sec_Process}, we are going to prove that (\ref{itm_mainShort_Acc}) implies (\ref{itm_mainShort_Process2}) and hence (\ref{itm_mainShort_Process}) of Theorem~\ref{mainthmShort}.
In Section~\ref{sec_pf}, we fill in the remaining gaps of the proof of Theorem~\ref{mainthmShort}.

\section{Tree amalgamations}\label{sec_ta}

In this section, we will state all notations and results that are needed in the context of \ta s.

A tree $T$ with the canonical bipartition $\{V_1,V_2\}$ of its vertex set
is called \emph{$(p_1,p_2)$-semiregular} if all vertices in~$V_i$ have degree~$p_i$ for $i=1,2$.

Let $e=xy$ be an edge of a graph~$G$ and let $v_e$ be a new vertex.
Let $G'$ be the graph with vertex set $(V(G)\sm\{x,y\})\cup\{v_e\}$ and edges between $u,v\in V(G')\sm\{v_e\}$ if and only if $uv\in E(G)$ and between $u\in V(G')\sm\{v_e\}$ and~$v_e$ if and only if $u$ is adjacent to either $x$ or~$y$ in~$G$.
Then $G'$ is the graph obtained by \emph{contracting} the edge~$e$.
If $E$ is a subset of $E(G)$, then we denote by $G/E$ the graph obtained by contracting all edges in~$E$.

Let~$G_i$ be a graph for $i=1,2$.
Let $I_1$ and $I_2$ be disjoint sets.
Let every $V(G_i)$ have a family $(S_k^i)_{k \in I_i}$ of subsets such that all these subsets have the same cardinality.
For all $k\in I_1$ and $\ell\in I_2$, let $\phi_{k\ell}$ be a bijective map from $S_k^1$ to~$S_\ell^2$.
We set $\phi_{\ell k}\coloneq\phi_{k\ell}\inv$ and call the maps $\phi_{k\ell}$ and $\phi_{\ell k}$ \emph{bonding maps}.

Let~$T$ be a $(|I_1|,|I_2|)$-semiregular tree with the canonical bipartition $\{V_1,V_2\}$ such that the vertices in~$V_i$ have degree~$|I_i|$.
Let $D(T)$ be the set of oriented edges of $E(T)$, i.\,e.\ $D(T)=\{\fwd{uv}\mid uv\in E(T)\}$.
If $\fwd{e}=\fwd{uv}\in D(T)$, then we denote by $\bwd{e} \coloneq \bwd{vu}$ its reverse.
Let $v\in V_i$ and let $E_v$ be the set of all edges in~$D(T)$ starting at~$v$.
Let $f\colon D(T) \to I_1 \cup I_2$ be a labelling such that its restriction to~$E_v$ is a bijection to~$I_i$.

For every $i\in\{1,2\}$ and every $v\in V_i$, let $G_v$ be a copy of~$G_i$. 
Denote by~${S^v_k}$ the corresponding copies of~$S_k^i$ in~$V(G_v)$. 
Let $G_1+G_2$ be the graph obtained from the disjoint union of the graphs $G_v$ for all $v\in V(T)$ by adding new edges between each $x\in S^{u}_k$ and $\phi_{k\ell}(x)\in S^v_\ell$ for every edge $\fwd{e} = \fwd{uv}$ with $f(\fwd{e}) = k$ and $f(\bwd{e}) = \ell$.
The new edges do not depend on the orientation of~$\fwd{e}$ because of $\phi_{\ell k} = \phi_{k\ell}^{-1}$.
Let $F$ be the set of these new edges of $G_1+G_2$.
The \emph{\ta} $G_1\ast_T G_2$, or just $G_1\ast G_2$, of the graphs~$G_1$ and~$G_2$ over the \emph{connecting tree}~$T$ is defined as $(G_1+G_2)/F$.
Let $\pi\colon V(G_1+G_2)\to V(G_1\ast G_2)$ be the canonical map that maps each $x\in V(G_1+G_2)$ to the vertex obtained from~$x$ after all the contractions.

The sets $S_k^i$ and their canonical images in $G_1\ast G_2$ are the \emph{adhesion sets} of the \ta.
The \ta\ has \emph{finite adhesion} if one (and hence all) of its adhesion sets are finite. 
We call a \ta\ $G_1\ast_T G_2$ \emph{trivial} if for some $v\in V(T)$ the restriction of~$\pi$ to~$G_v$ is a bijection.
Note that if the \ta\ has finite adhesion, then it is trivial if, for some $i\in\{1,2\}$, the set $V(G_i)$ is the only adhesion set of~$G_i$ and $|I_i|=1$.

For a vertex $x\in V(G_1\ast G_2)$ let $T_x$ be the maximal subtree of~$T$ such that every node of~$T_x$ contains a vertex $y$ with $\pi(y)=x$.
The \emph{identification size} of a vertex $x\in V(G_1\ast G_2)$ is the cardinality of~$V(T_x)$.
The \ta\ has \emph{finite identification} if the identification sizes of its vertices are bounded.

So far, the \ta\ do not interact with any group action.
In the following, we define some notions that ensure that \ta s of \qt\ graphs that satisfy this notion are again \qt, see \cite[Lemma~5.3]{HLMR}.

For $i=1,2$, let $\Gamma_i$ be a group that acts on~$G_i$.
Let $\{i,j\}=\{1,2\}$.
The \ta\ \emph{respects $\gamma\in\Gamma_i$} if there is a permutation $\pi$ of~$I_i$ such that for every $k \in I_i$ there exists $\ell \in I_j$ and $\tau$ in the setwise stabiliser of~$S_\ell$ in~$\Gamma_j$ such that 
\[
\phi_{k\ell} = \tau \circ \phi_{\pi(k)\ell} \circ \gamma\mid_{S_k}.
\]
The \ta\ \emph{respects $\Gamma_i$} if it respects every $\gamma \in \Gamma_i$.

Let $k\in I_i$ and let $\ell,\ell'\in I_j$. 
The bonding maps from $k$ to $\ell$ and $\ell'$ are \emph{consistent} if there exists $\gamma \in \Gamma_j$ such that 
\[
    \phi_{k\ell} = \gamma \circ \phi_{k\ell'}.
\]
The bonding maps between $J_i \subseteq I_i$ and $J_j\subseteq I_j$ are \emph{consistent} if they are consistent for all $k \in J_i$ and $\ell,\ell' \in J_j$.

The \ta\ $G_1\ast G_2$ is of \emph{Type 1 respecting the actions of~$\Gamma_1$ and $\Gamma_2$} if the following holds:
\begin{enumerate}[(i)]
\item \label{itm:T1respaction} the \ta\ respects $\Gamma_1$ and $\Gamma_2$;
\item \label{itm:T1esim} the bonding maps between $I_1$ and $I_2$ are consistent.
\end{enumerate}

The \ta\ $G_1\ast G_2$ is of \emph{Type 2 respecting the actions of~$\Gamma_1$ and $\Gamma_2$} if the following holds:
\begin{enumerate}[(i)]
\item[(o)] \label{itm:T2prelim} $G_1=G_2=:G$, $\Gamma_1=\Gamma_2=:\Gamma$,  $I_1=I_2=:I$,\footnote{Formally we asked $I_1$ and $I_2$ to be disjoint. We can guarantee this easily by using appropriate bijections.} and there exists $J \subseteq I$ such that $f(\fwd e) \in J$ if and only if $f(\bwd e) \notin J$;
\item \label{itm:T2respaction} the \ta\ respects $\Gamma$;
\item \label{itm:T2esim} the bonding maps between $J$ and $I\sm J$ are consistent.
\end{enumerate}

The \ta\ $G_1\ast G_2$ \emph{respects the actions (of $\Gamma_1$ and $\Gamma_2$)} if it is of either Type~1 or Type~2 respecting the actions $\Gamma_1$ and $\Gamma_2$.

\medskip

A \emph{ray} is a one-way infinite path.
Two rays are \emph{equivalent} if for every finite $S\sub V(G)$ both rays have all but finitely many vertices in the same component of $G-S$.
This is an equivalence relation whose equivalence classes are the \emph{ends} of~$G$
An end is \emph{thick} is is contains infinitely many pairwise disjoint rays.
A \emph{double ray} is a two-way infinite path.

The \ta\ $G=G_1\ast G_2$ \emph{distinguishes} ends if there is some adhesion set $S_k^u=S_\ell^v$ for adjacent $u,v\in V(T)$ such that for every component $C$ of $T-uv$ the graph induced by $\bigcup_{w\in C}G_i^w$ contains an end.

A graph $G$ is \emph{accessible in the sense of Thomassen and Woess} if there is an $n\in\N$ such that any two distinct ends of~$G$ are separable by at most~$n$ edges, that is, there are $n$ edges such that every double ray between these two ends contains one of those $n$ edges.

Let $\Gamma$ be a group acting on a tree~$T$.
The action is \emph{inversion-free} if there is no edge $uv$ of~$T$ and no $\gamma\in\Gamma$ such that $\gamma(u)=v$ and $\gamma(v)=u$.
We then also say that $\Gamma$ acts on~$T$ \emph{without inversion}.

A \emph{\td} of a graph~$G$ is a pair $(T,\cV)$ consisting of a tree~$T$ and a set of vertex sets $V_t$ of~$G$, one for every node of~$T$, such that the following hold:
\begin{enumerate}[(T1)]
\item $V(G)=\bigcup_{t\in V(T)}V_t$;
\item for every $e\in E(G)$ there exists $t\in V(T)$ with $e\sub V_t$;
\item $V_{t_1}\cap V_{t_3}\sub V_{t_2}$ for all $t_1,t_2,t_3\in V(T)$ such that $t_2$ separates $t_1$ and~$t_3$.
\end{enumerate}

The elements of~$\cV$ are the \emph{parts} of the \td.
The sets $V_t\cap V_{t'}$ for edges $tt'\in E(T)$ are the \emph{adhesion sets} of $(T,\cV)$.
If all adhesion sets are finite we say that $(T,\cV)$ has \emph{finite adhesion}.

A \emph{separation} of~$G$ is an ordered pair~$(A,B)$ such that~$G[A]\cup G[B]=G$, that is, $V(G)=A\cup B$ and there is no edge with one end vertex in~$A\sm B$ and the other in $B\sm A$.
The \emph{order} of~$(A,B)$ is~$|A\cap B|$.
A separation $(A,B)$ is \emph{tight} if there are components $C_A$ of $A\sm B$ and $C_B$ of $B\sm A$ such that every $x\in A\cap B$ has neighbours in~$C_A$ and in~$C_B$.

For two separation $(A,B)$ and $(C,D)$, we write $(A,B)\leq (C,D)$ if $A\subseteq C$ and $B\supseteq D$.
We call $(A,B)$ and $(C,D)$ \emph{nested} if they are comparable with respect to this relation and a set of separations is \emph{nested} if every two of its elements are nested.

If $(T,\cV)$ is a \td\ of~$G$, then the separations \emph{induced by} $(T,\cV)$ are those of the form $(\bigcup_{t\in T_1}V_t,\bigcup_{t\in T_2}V_t)$ for edges $t_1t_2\in E(T)$, where $T_i$ is the component of $T-t_1t_2$ that contains~$t_i$.
It follows from (T3) that these are indeed separations and its separator is $V_{t_1}\cap V_{t_2}$.
Furthermore, the set of all separations induced by $(T,\cV)$ is nested.

If a group $\Gamma$ acts on~$G$, a \td\ $(T,\cV)$ is \emph{$\Gamma$-invariant} if the induced action of~$\Gamma$ on~$\cV$ induces an action on~$T$.

Let $(T,\cV)$ and $(T',\cV')$ be \td s of~$G$.
We call $(T',\cV')$ a \emph{refinement} of $(T,\cV)$ if there is a family of disjoint subtrees $(T_i)_{i\in I}$ of~$T'$ covering $V(T')$ such that the following holds:
\begin{enumerate}[(R1)]
\item $T=T'/\bigcup_{i\in I} E(T_i)$;
\item $\bigcup_{s\in T_i} V_s'=V_t$, where $t$ is the node of~$T$ obtained from the contraction of~$E(T_i)$.
\end{enumerate}

Tree amalgamations induce in a canonical way \td s.
This is one of the main properties that we will use in the proofs of our main result.
The way how \ta s $G\coloneq G_1\ast_TG_2$ induce \td s was discussed in~\cite[Remark 5.1]{HLMR}: the pair
\[
(T,\{\pi(V(G_u))\mid u\in V(T)\})
\]
is a \td\ \emph{corresponding to} the factorisation $(G_1,G_2)$ of~$G$ all of whose parts induce connected graphs.

\section{The semiring $\s(G)$}\label{sec_semiring}

A \emph{semiring} is a triple $(R,+,\times)$ such that $(R,+)$ is an abelian monoid, $(R,\times)$ is a monoid and $\times$ is distributive over~$+$.
A semiring $(R,+,\times)$ is \emph{commutative} if $(R,\times)$ is commutative.
A set $S\sub R$ \emph{generates} $R$ if every $r\in R$ is obtained by finitely many additions and multiplications of elements of~$S$.

An immediate corollary of a result by Thomassen and Woess \cite[Proposition 4.2]{ThomassenWoess} is the following.

\begin{prop}\label{prop_corTW}
Let $G$ be a locally finite graph, let $v\in V(G)$ and let $k\in\N$.
Then there are only finitely many tight separations of order $k$ with $v$ in their separator.\qed
\end{prop}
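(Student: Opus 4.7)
The plan is to extract this as a short corollary of \cite[Proposition 4.2]{ThomassenWoess} via a local-finiteness reduction at $v$. First I would unpack tightness. Given a tight separation $(A,B)$ of order $k$ with $v\in A\cap B$, by definition there exist components $C_A$ of $G[A\sm B]$ and $C_B$ of $G[B\sm A]$ such that every vertex of $A\cap B$ has neighbours in both $C_A$ and $C_B$. Applying this to $v$ yields a neighbour $x\in N(v)\cap C_A$ and a neighbour $y\in N(v)\cap C_B$. Since $G$ is locally finite, $N(v)$ is finite, so the ordered pair $(x,y)$ ranges over a finite set as $(A,B)$ varies over the tight separations of order $k$ through $v$.

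Next I would fix such a witness pair $(x,y)$ and count the tight separations it witnesses. The separator $A\cap B$ is then a vertex set of size $k$ whose removal tightly separates $x$ from $y$, in the sense that each of its $k$ vertices still has a neighbour in the component of $G-(A\cap B)$ containing $x$ and in the one containing $y$. The Thomassen--Woess proposition ensures that only finitely many such separators exist. Once a separator $S$ is fixed together with the distinguished components $C_A\ni x$ and $C_B\ni y$ of $G-S$, the separation $(A,B)$ is determined up to distributing the remaining components of $G-S$ between $A$ and $B$; in the connected locally finite setting every component of $G-S$ meets $N(S)$, so the number of such components is at most $\sum_{u\in S}\deg_G(u)$, which is finite. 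Summing over the finitely many witness pairs yields the claim.

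The main obstacle will be aligning conventions with the cited proposition: Thomassen and Woess may phrase Proposition~4.2 in terms of minimal cuts or in terms of separators distinguishing two ends rather than our tight vertex separations between two fixed vertices, and the reduction above may require a light translation. Once that translation is in place, the corollary is immediate, the essential content being the finite local structure at $v$ and the finiteness of the set of components of $G-S$ for any finite $S$ in a locally finite connected graph.
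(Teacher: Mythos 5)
The paper supplies no argument for this proposition; it is asserted as an immediate corollary of Thomassen--Woess Proposition~4.2 and closed with \qed, so there is no "paper proof" to align with. Your reconstruction sensibly isolates the two genuine ingredients that turn a finiteness statement about tight \emph{separators} into one about tight \emph{separations}: a local-finiteness reduction at $v$ to finitely many witness configurations, and the observation that once the separator $S=A\cap B$ is fixed, a separation with that separator is determined by a two-colouring of the components of $G-S$, of which there are finitely many.

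Two caveats are worth recording. The component-counting step requires $G$ to be connected --- otherwise $G-S$ may have infinitely many components and the proposition is in fact false for disconnected locally finite graphs (distribute the stray components of $G$ between $A$ and $B$ at will); the paper's statement omits this hypothesis, but it is implicitly in force throughout and you are right to use it. The more substantive point is the one you flag yourself: the Thomassen--Woess result is phrased for a fixed \emph{edge} $e=xy$, counting tight separators of bounded order that place the endpoints of $e$ in distinct components, whereas your witness pair $(x,y)$ consists of two neighbours of $v$ that need not be adjacent, and the edges you do have at hand, $vx$ and $vy$, are not separated by $S$ since $v\in S$. So the cited proposition does not apply verbatim to your pair. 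The fix is easy --- the compactness argument behind Proposition~4.2 works just as well for a tight separator required to contain a fixed vertex $v$, or for two fixed vertices at bounded distance --- but that small extension should be stated rather than absorbed into the invocation. Once that is made explicit, the argument is complete.
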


Let $\s(G)$ be the set of all separations of finite order of~$G$.

We define for $(A,B),(C,D)\in\s(G)$ the following operations:
\[
(A,B)+(C,D)\coloneq (A\cap C, B\cup D),
\]
\[
(A,B)\times(C,D)\coloneq (A\cup C, B\cap D).
\]

Simple calculations show that $(\s(G),+,\times)$ is a commutative semiring, where $(V(G),\es)$ is the neutral element with respect to~$+$ and $(\es,V(G))$ is the neutral element with respect to~$\times$.

Let $\s_n(G)$ be the subsemiring of~$\s(G)$ that is generated by the tight separations of order at most~$n$.

\begin{prop}\label{prop_sepGenByTight}
Let $G$ be a locally finite graph.
Every separation of order~$n$ is generated by tight separations of order at most~$n$.
\end{prop}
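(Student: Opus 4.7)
The plan is to decompose an arbitrary separation $(A, B)$ with separator $S := A \cap B$ of order $n$ into tight separations of order at most $n$ via the semiring operations, proceeding by induction on $n$.

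For $n = 0$ the separator is empty, so the tightness condition is vacuous: every order-zero separation with both sides non-empty is itself tight, while the degenerate cases $(\es, V)$ and $(V, \es)$ are neutral elements of the semiring. For $n \ge 1$, the central construction uses the components of $G \sm S$. For each component $C$ contained in $B \sm A$ set $\alpha_C := (V \sm C, C \cup N(C))$, and for each component $C$ contained in $A \sm B$ set $\beta_C := (C \cup N(C), V \sm C)$. Since $N(C) \sub S$, each of these has order at most $n$. A direct calculation shows
\begin{equation*}
\sum_{C \sub B \sm A} \alpha_C = \bigl(A,\, (B \sm A) \cup N^B\bigr),
\end{equation*}
where $N^B := \bigcup_{C \sub B \sm A} N(C) \sub S$. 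Combined with the identity $(A, B) = (V, S) + \sum_{C \sub B \sm A} \alpha_C$ (valid because $V \cap A = A$ and $S \cup (B \sm A) = B$), the problem reduces to generating both $(V, S)$ and each $\alpha_C$.

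For each $\alpha_C$: the component side $C$ is automatically tight, since $C$ is connected and every $s \in N(C)$ is adjacent to $C$ by definition. If the complementary side $V \sm (C \cup N(C))$ also admits a component witnessing tightness, then $\alpha_C$ itself is tight of order $|N(C)| \le n$; otherwise some $v \in N(C)$ has all non-$C$ neighbours inside $N(C)$, in which case $(V \sm C \sm \{v\}, C \cup N(C))$ is a valid separation of order $|N(C)| - 1$ and one recovers $\alpha_C = (V \sm C \sm \{v\}, C \cup N(C)) \times (\{v\}, V)$. Iterating expresses $\alpha_C$ as a combination of tight separations of order at most $|N(C)| \le n$ together with some $(\{v\}, V)$ terms of order $1$.

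For $(V, S)$ and for the $(\{v\}, V)$ terms, I would write $(V, S) = \sum_{s \in S}(V, \{s\})$ and produce each $(V, \{s\})$ via the observation that $(V, T) = \sigma \times \sigma^*$ holds for any tight separation $\sigma$ with separator $T$ (where $\sigma^* := (B_\sigma, A_\sigma)$ is its reverse, tight of the same order). Combining this with the intersection rule $(V, T_1) \times (V, T_2) = (V, T_1 \cap T_2)$ produces $(V, \{s\})$ as a product of terms derived from tight separations $\sigma_i$ of order at most $n$ whose separators contain $s$ and collectively intersect in $\{s\}$. The main obstacle is this last step---constructing such a family of tight separations near each $s$---which uses the local structure of $G$ at $s$ together with the fact that $s$ lies in the separator of the given separation $(A, B)$.
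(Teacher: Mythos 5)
Your decomposition $(A,B) = (V,S) + \sum_{C\subseteq B\smallsetminus A}\alpha_C$ is close to the additive decomposition the paper uses, but you have missed the one idea that makes the induction go through, and as a result the argument has genuine gaps in two places, one of which you acknowledge.

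The paper's proof is an induction on the order, and its engine is the non-tightness of $(A,B)$: it case-splits on \emph{which} side of the separation fails the tightness condition. If $B\sm A$ has no component $C$ with $A\cap B\subseteq N(C)$, it writes $(A,B) = (V,Y) + \sum_{C}(V\sm C, C\cup N(C))$ with $Y = (A\cap B)\sm\bigcup_C N(C)$ --- the same components as in your sum, but with the left summand cut down to $Y$ rather than $S$. Because \emph{every} $C$ on that side has $N(C)\subsetneq A\cap B$, each summand $\alpha_C$ has order strictly less than $n$, and so does $(V,Y)$ (once $\bigcup_C N(C)\neq\emptyset$). If instead $A\sm B$ is the side that fails, the paper uses the dual multiplicative decomposition. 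In both cases the order drops and the induction closes. Your version always uses the additive decomposition over $B\sm A$ regardless of where tightness fails, and always keeps the full $(V,S)$ as a summand, which has the same order $n$ as $(A,B)$; so you have not reduced anything, and you are forced into the auxiliary programme of generating $(V,S)$ and $(\{v\},V)$ from scratch.

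That auxiliary programme has two problems. First, the step for $\alpha_C$ is not correct as stated: from ``no component of $V\sm(C\cup N(C))$ witnesses tightness'' you conclude ``some $v\in N(C)$ has all its non-$C$ neighbours inside $N(C)$,'' but this implication fails. It is perfectly possible that every $v\in N(C)$ has a neighbour outside $C\cup N(C)$ while these neighbours are scattered across several components, none of which sees all of $N(C)$; then $\alpha_C$ is not tight, yet there is no vertex to peel off. Second, the step you yourself flag --- producing $(V,\{s\})$ as a product $\prod_i \sigma_i\times\sigma_i^*$ over tight separations with separators intersecting in $\{s\}$ --- cannot be carried out in general: there is no reason for a locally finite graph to possess, for a given vertex $s$, any tight separations of order at most $n$ whose separators contain $s$, let alone a family whose separators intersect exactly in $\{s\}$. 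So the route through $(V,S)$ and $(\{v\},V)$ does not close, and the fix is not a cleverer construction of $\sigma_i$'s but rather to avoid ever introducing a summand of full order $n$: choose the side of the separation along which tightness fails, decompose along the components of that side, and observe that the non-tightness hypothesis forces every piece to have strictly smaller order.
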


\begin{proof}
We prove the assertion by induction on the order of the separation.
Let $(A,B)$ be a separation of order~$n$ that is not tight.
Since $(A,B)$ is not tight, either $A\sm B$ or $B\sm A$ has no component $C$ with $A\cap B\sub N(C)$.

If $A\sm B$ has no such component, let $\Kcal_A$ be the set of all components of $A\sm B$.
Then
\[
(A,B)=(X,V(G))\times \prod_{C\in \Kcal_A}(C\cup N(C),V(G)\sm C),
\]
where $X=(A\cap B)\sm \bigcup_{C\in \Kcal_A}N(C)$.
Every separation of that product has order less than $n$.
By induction, $(A,B)$ can be generated by tight separations of order less than~$n$.

If $B\sm A$ has no component $C$ with $A\cap B\sub C$, let $\Kcal_B$ be the set of components of $B\sm A$.
Then
\[
(A,B)=(V(G),Y)+\sum_{C\in\Kcal_B}(V(G)\sm C,C\cup N(C)),
\]
where $Y=(A\cap B)\sm\bigcup_{C\in\Kcal_B}N(C)$.
Every summand of that sum has order less than~$n$, so by induction, $(A,B)$ is generated by tight separations of order at most~$n$.
\end{proof}

A \td\ $(T,\Vcal)$ \emph{distinguishes} two ends \emph{(efficiently)} if the separator of some separation induced by $(T,\Vcal)$ separates those ends \emph{(minimally)}.
We need the following result for the proof of Proposition~\ref{prop_TWAccToS=Sn}.

\begin{thm}{\rm \cite[Theorem 6.4]{HLMR}}\label{thm_HLMR6.3}
Let $G$ be a connected locally finite graph that is accessible in the sense of Thomassen and Woess and let $\Gamma$ be a group acting \qt ly on~$G$.
Then there exists a $\Gamma$-invariant \td\ $(T,\Vcal)$ of~$G$ of finite adhesion such that $(T,\Vcal)$ distinguishes all ends of~$G$ efficiently and such that there are only finitely many $\Gamma$-orbits on $E(T)$.\qed
\end{thm}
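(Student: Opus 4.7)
The plan is to build the desired tree-decomposition from a $\Gamma$-invariant nested family $\mathcal{N}$ of separations of order at most $n$, where $n$ is the Thomassen--Woess accessibility bound of $G$. The argument follows the classical Dunwoody-style strategy: produce an abundant pool of efficient separations, uncross them $\Gamma$-equivariantly, and convert the resulting nested system into a tree-decomposition.

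First, I would translate the edge-separation bound of accessibility into vertex separations. For each pair of distinct ends $\omega,\omega'$, fix an edge set of size at most $n$ separating them and take endpoints, yielding a separation $(A,B)\in\s(G)$ of order at most $n$ whose separator minimally separates $\omega$ from~$\omega'$. Using Proposition~\ref{prop_sepGenByTight} (or a direct tightening argument), one may restrict attention to tight such separations. Let $\mathcal{S}_0$ be the $\Gamma$-closure of this collection: a $\Gamma$-invariant set of tight efficient separations of order at most~$n$. Quasi-transitivity ensures finitely many $\Gamma$-orbits on $V(G)$, and Proposition~\ref{prop_corTW} states that each vertex appears in the separator of only finitely many tight separations of order at most~$n$. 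Combining these shows $\Gamma\setminus\mathcal{S}_0$ is finite.

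The main obstacle is the nesting step: extract from $\mathcal{S}_0$ a $\Gamma$-invariant nested subfamily $\mathcal{N}$ that still efficiently distinguishes every pair of ends. The standard uncrossing argument takes two crossing minimum separations and replaces them with corner separations; by submodularity, at least one pair of opposite corners realises the separation orders and still distinguishes the relevant ends. The delicate point is to perform this uncrossing \emph{$\Gamma$-equivariantly}, picking a canonical corner so that the resulting family is $\Gamma$-invariant and still has finitely many orbits. One way to organise this is to linearly (pre-)order orbits and apply uncrossing orbit-by-orbit, showing at each stage that the number of pairs of crossing separations in finitely many orbits is finite modulo $\Gamma$ and that the process terminates.

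Finally, the nested $\Gamma$-invariant family $\mathcal{N}$ is turned into a $\Gamma$-invariant tree-decomposition $(T,\Vcal)$ via the standard structure-tree construction: nodes of~$T$ are orientations of~$\mathcal{N}$ consistent at infinity, and the part $V_t$ is the intersection of the $B$-sides of the separations oriented towards~$t$. The adhesion sets of $(T,\Vcal)$ are exactly the separators of elements of~$\mathcal{N}$, so they have order at most~$n$, giving finite adhesion; the finitely many $\Gamma$-orbits on $\mathcal{N}$ translate directly to finitely many $\Gamma$-orbits on $E(T)$; and the efficient distinguishing of ends is built into the choice of~$\mathcal{S}_0$. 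Verification of the tree-decomposition axioms (T1)--(T3) is then routine from nestedness and the fact that every vertex lies in only finitely many parts (a consequence of local finiteness and the bounded adhesion).
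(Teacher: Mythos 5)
The paper does not prove this theorem itself; it is cited as Theorem~6.4 of~\cite{HLMR} and marked with \verb|\qed|, so there is no in-paper argument to compare against. Your proposal follows the right general blueprint (collect efficient separations, equivariantly uncross, convert the nested family into a structure tree), but there is a genuine gap at the step you yourself flag as delicate.

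The uncrossing argument is not supplied, and it is the technical heart of the theorem. You assert that ``by submodularity, at least one pair of opposite corners realises the separation orders and still distinguishes the relevant ends,'' but this is not what submodularity gives you: it bounds the \emph{sum} of the orders of two opposite corners, and when the two crossing separations distinguish \emph{different} pairs of ends, you generally cannot discard either one. You also need a $\Gamma$-equivariant rule for choosing which corner(s) to keep, a proof that the replacement process terminates, and a proof that the surviving nested family still distinguishes every pair of ends with a separator of minimum order. None of these is routine; in the literature they are handled by more structured devices than uncrossing orbit-by-orbit, e.g.\ restricting to separations that are nested with every other separation of the same order and type, or organising the construction around tangles or profiles with a well-founded ordering (Dunwoody--Kr\"on; Carmesin--Diestel--Hamann--Hundertmark). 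Without such a device, both the termination claim and the efficiency claim are unsupported. Two smaller points: taking endpoints of an $n$-edge separator yields a vertex separator of size up to $2n$, not $n$ (harmless but should be stated), and the appeal to Proposition~\ref{prop_sepGenByTight} is not quite the right tool for tightening a \emph{single} minimum separation while preserving which ends it distinguishes -- a direct tightening argument is what you want there.
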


The proof of the following result is based on the idea of a proof of Thomassen and Woess~\cite[Theorem 7.6]{ThomassenWoess}.

\begin{prop}\label{prop_TWAccToS=Sn}
Let $G$ be a \qt\ locally finite connected graph that is accessible in the sense of Thomassen and Woess.
Then there exists $n\in\N$ such that $\s(G)=\s_n(G)$.
\end{prop}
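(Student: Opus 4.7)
My plan is to apply Theorem~\ref{thm_HLMR6.3} to obtain an $\Aut(G)$-invariant tree-decomposition $(T,\Vcal)$ of $G$ with finite adhesion, say bounded by $k$, that distinguishes all ends of $G$ efficiently and has only finitely many $\Aut(G)$-orbits on $E(T)$. Efficient distinguishing of every pair of ends forces each part of $(T,\Vcal)$ to contain at most one end of~$G$, since two ends living in a common part could not be separated by any induced separation. I would then choose $n$ large enough to bound simultaneously the adhesion $k$, the maximum vertex degree of~$G$ (finite by local finiteness and quasi-transitivity, so that every elementary separation has order at most~$n$), and the orders of a finite system of orbit representatives of ``local'' tight separations living inside a single part.

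By Proposition~\ref{prop_sepGenByTight} it suffices to show that every tight separation of~$G$ lies in $\s_n(G)$, and I would do this by induction on the order $m=|A\cap B|$ of a tight separation $(A,B)$. The case $m\leq n$ is immediate. For $m>n$, I would exploit the tightness witnesses $C_A\sub A\sm B$ and $C_B\sub B\sm A$. If both are infinite then, being connected and locally finite, each contains a ray and hence an end of~$G$, and those two ends are distinct since $A\cap B$ separates them. Efficient distinguishing then yields a TD-induced separation $(A',B')$ with $|A'\cap B'|\leq k\leq n$ that separates the same pair of ends. The aim is to combine $(A',B')$ with finitely many of its $\Aut(G)$-translates and with elementary separations, via $+$ and $\times$, so as to rewrite $(A,B)$ as a semiring expression in tight separations of order strictly less than~$m$; the induction hypothesis then applies to each factor. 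If instead one of $C_A, C_B$ is finite, I would peel it off using elementary separations (each of order at most the maximum degree, hence $\leq n$) and reduce to a tight separation of strictly smaller order.

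The hard part will be the first reduction, since a single application of $+$ or $\times$ between $(A,B)$ and $(A',B')$ in general only inflates the separator. To obtain an actual drop in order I would have to decompose $A\cap B$ according to how it sits relative to $(T,\Vcal)$, use several translates of $(A',B')$ to cover all parts of the tree-decomposition that $A\cap B$ meets, and then apply Proposition~\ref{prop_sepGenByTight} again inside the induction to resolve the intermediate non-tight separations into tight pieces whose separators are proper subsets of $A\cap B$. This is where the efficiency of $(T,\Vcal)$, the finiteness of $\Aut(G)\setminus E(T)$, and Proposition~\ref{prop_corTW} (which bounds the number of tight separations of a given order passing through a given vertex) must all be combined to keep the bookkeeping finite and to guarantee the strict drop in order needed to close the induction.
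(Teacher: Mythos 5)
There is a genuine gap, and you have essentially named it yourself: the heart of the argument is left unproved. Your plan is to induct on the order $m=|A\cap B|$ of a tight separation, and the induction step requires rewriting a tight separation of order $m>n$ as a semiring expression in tight separations of order $<m$. You correctly observe that a single $+$ or $\times$ against a TD-induced separation $(A',B')$ generally \emph{enlarges} the separator, so the step does not follow from the operations' definitions. The paragraph that is supposed to close the induction instead describes a programme (decompose $A\cap B$ relative to $(T,\Vcal)$, take several translates, re-apply Proposition~\ref{prop_sepGenByTight} to intermediate separations, invoke Proposition~\ref{prop_corTW}) without establishing that any of these manipulations actually produce tight separations whose separators are \emph{proper} subsets of $A\cap B$. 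In particular, Proposition~\ref{prop_sepGenByTight} only says a separation of order $k$ is generated by tight separations of order at most $k$; it gives no control on where those separators lie and no strict decrease, so invoking it ``inside the induction'' is circular rather than a reduction. As written, the proof does not go through.

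The paper avoids an induction on the order of tight separations entirely and the two approaches are genuinely different. Given $(A,B)\in\s(G)$, the paper first shows by a compactness argument (extracting a limit double ray from a sequence $(P_i)$) that there is a \emph{finite} family $\Fcal$ of TD-induced separations that separates every end living in $A$ from every end living in $B$. From $\Fcal$ it builds a single separation $(C,D)$, generated by $\Fcal$, with the property that $A\sm C$ and $C\sm A$ are finite. Then $(A,B)$ and $(C,D)$ differ only by finitely many additions and multiplications with elementary separations, so $(A,B)$ is generated by separations of order at most $n:=\max\{n_1,n_2\}$ (adhesion bound and max degree), and Proposition~\ref{prop_sepGenByTight} converts this into generation by tight separations of order at most $n$. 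No strict drop in separator order is ever needed; the one nontrivial combinatorial lemma is the finiteness of $\Fcal$, proved by König-type convergence of double rays through the finite set $A\cap B$. If you want to salvage your approach, the missing ingredient is precisely a replacement for that compactness/finite-patching step, and it is not clear the tight-by-tight order induction can supply it.
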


\begin{proof}
Let $(T,\Vcal)$ be a \td\ as in Theorem~\ref{thm_HLMR6.3}.
In particular, there are only finitely many $\Aut(G)$-orbits on~$E(T)$.
Let $\Tcal$ be the set of separations that are induced by $(T,\Vcal)$.
Then there are only finitely many $\Aut(G)$-orbits on $\Tcal$ as well.
Let $n_1$ be the maximum order of separations in~$\Tcal$ and let $n_2$ be the maximum degree of~$G$.
Set $n\coloneq\max\{n_1,n_2\}$.
We will show $\s(G)=\s_n(G)$.

Let $(A,B)\in\s(G)$.
Our aim is to show that $(A,B)$ is generated by elements of $\s_n(G)$.
Let $\Omega_A,\Omega_B$ be the set of ends of~$G$ that live in~$A$, in~$B$, respectively.
We claim the following.
\begin{txteq}\label{clm_Ffinite}
There is a finite $\Fcal\sub\Tcal$ such that for every $\omega_A\in\Omega_A$ and every $\omega_B\in\Omega_B$ there exists $(C,D)\in\Fcal$ such that $\omega_A$ lives in~$C$ and $\omega_B$ lives in~$D$.
\end{txteq}
Suppose (\ref{clm_Ffinite}) does not hold.
Let $\{(A_i,B_i)\mid i\in\N\}=\Tcal$.
For every $i\in\N$ let $\omega_i^A\in\Omega_A$ and $\omega_i^B\in\Omega_B$ such that $\omega_i^A$ and $\omega_i^B$ are not distinguished by any $(A_j,B_j)$ with $j\leq i$.
These ends exist as (\ref{clm_Ffinite}) does not hold.
For every $i\in\N$ let $P_i$ be a double ray between $\omega_i^A$ and~$\omega_i^B$ none of whose vertices are separated by any $(A_j,B_j)$ with $j\leq i$.
Every double ray $P_i$ meets the finite vertex set $A\cap B$.
Thus the sequence $(P_i)_{i\in\N}$ has a subsequence that converges to a double ray~$P$: infinitely many $P_i$ share an edge incident with some vertex of $A\cap B$, among which we find an infinite subsequence whose edges adjacent to the first one coincide on each side and so on.
By construction, one tail of~$P$ lies in~$A$ and another tail lies in~$B$.
In particular it has tails in distinct ends of~$G$.
By the choice of the double rays $P_i$, no $(A_i,B_i)$ separates tails of~$P$, that is, the two ends of~$G$ that contain tails of~$P$ are not distinguished by any $(A_i,B_i)$ and thus are not distinguished by $(T,\Vcal)$.
This contradiction to the choice of $(T,\Vcal)$ shows (\ref{clm_Ffinite}).

Let $F$ be the set of edges of~$T$ that corresponds to the finite set $\Fcal$.
Let $V_A$ be the set of nodes of~$T$ that lie in components $C$ of $T-F$ such that some end of~$\Omega_A$ lives in $\bigcup_{t\in C}V_t$.
Set $V_B\coloneq V(T)\sm V_A$.
We consider the separation
\[
(C,D)\coloneq\left(\bigcup_{t\in V_A}V_t,\bigcup_{t\in V_B}V_t\right).
\]
Then $(C,D)$ is generated by~$\Fcal$.
By construction, $\Omega_A$ is the set of ends of~$G$ that live in~$C$ and $\Omega_B$ is the set of ends of~$G$ that live in~$D$.
We shall prove the following.
\begin{txteq}\label{clm_main2}
The sets $A\sm C$ and $C\sm A$ are finite.
\end{txteq}
For every vertex $x\in A\sm C$ and every neighbour $y$ of~$x$ outside of $A\sm C$, we have either $y\in C\cap D$ or $x\in A\cap B$ and $y\in N(A)$.
Since $X\coloneq N(A)\cup (C\cap D)$ is finite, since $G$ is locally finite and since each component of $A\sm C$ is a component of $G-X$, the vertex set $A\sm C$ induces only finitely many components in~$G$.
If one of these components is infinite, there would be an end living in $A\sm C$ which is impossible as we already saw that this set is empty.
Thus $A\sm C$ is finite.
An analogous argument shows that $C\sm A$ is finite.
This completes the proof of~(\ref{clm_main2}).

Since (\ref{clm_main2}) holds, $(A,B)$ and $(C,D)$ differ only by addition and multiplication of elementary separations.
So $(A,B)$ is generated by separations of order at most~$n$.
Proposition~\ref{prop_sepGenByTight} implies that $(A,B)\in\s_n(G)$.
\end{proof}

\section{Iterated splittings}\label{sec_Process}

In this section, we will prove that one process of splittings of a \qt\ \lf\ connected graph stops if and only if every process of splittings of that graph does that.
In preparation for that, we define the following property for all $i\in\N$ and for a factorisation $(G_1,G_2)$ of a \qt\ locally finite connected graph~$G$.
\begin{equation}\tag{$*^{(i)}$}\label{itm_facEnds2}
	\begin{minipage}[t]{0.85\textwidth}	\em Every process of splittings of~$G$ that starts with $(G_1,G_2)$ ends after at most $i$ steps.
\end{minipage}
\end{equation}

\begin{lem}\label{lem_PartContainsS}
Let $(G_1,G_2)$ be a factorisation of a \qt\ locally finite connected graph~$G$ and let $S\sub V(G)$ be a finite set.
Then there exists a factorisation $(H_1,H_2)$ of~$G$ such that
\begin{enumerate}[\rm (i)]
\item\label{itm_PartContainsS_1} $S$ is contained in some part of the \td\ corresponding to $(H_1,H_2)$ and
\item\label{itm_PartContainsS_2} if $(H_1,H_2)$ satisfies (\ref{itm_facEnds2}), then $(G_1,G_2)$ satisfies (\ref{itm_facEnds2}).
\end{enumerate}
\end{lem}

\begin{proof}
Let $(T,\cV)$ be the \td\ corresponding to $(G_1,G_2)$.
Let $S'\sub V(G)$ be finite and connected and such that $S\sub S'$.
Let $T_{S'}$ be the minimal subtree of~$T$ such that for every $t\in V(T-T_{S'})$ we have $V_t\cap S'=\es$.
This subtree is finite since $G_1\ast G_2$ is of finite identification.
For every $t\in V(T)$, we set
\[
V_t':= V_t\cup\bigcup\{\alpha(S')\mid \alpha \in \Aut(G), t\in\alpha(T_{S'})\}
\]
and
\[
\cV':=\{V_t'\mid t\in V(T)\}.
\]
To see that $(T,\cV')$ is a \td\ it suffices to prove~(T3).
For this, it suffices to see that, for every $v\in V(G)$, the subgraph of~$T$ that contains~$v$ is a tree.
But this follows immediately from the definition of the subtrees $T_{S'}$ and the parts~$V_{t'}$.
By construction, $(T,\cV')$ corresponds to a factorisation $(H_1,H_2)$ of~$G$ such that for some edge $t_1t_2\in E(T)$ and every $i\in\{1,2\}$ the graph $H_i$ is isomorphic to the subgraph of~$G$ induced by $V_{t_i}'$.
This proves~(\ref{itm_PartContainsS_1}).

In order to prove (\ref{itm_PartContainsS_2}), we note that there is a canonical bijection between the ends of $G[V_t]$ and those of $G[V_t']$.
Starting with a process of splittings that starts with $(G_1,G_2)$, we obtain one that starts with $(H_1,H_2)$ if we add all vertices of $\alpha(S')$ to each adhesion set if it contains one vertex of $\alpha(S')$.
That way, we obtain a process of splittings that starts with $(H_1,H_2)$.
Thus, if all of those processes that start with $(H_1,H_2)$ stop after at most~$i$ steps, so must the processes starting with $(G_1,G_2)$.
This proves~(\ref{itm_PartContainsS_2}).
\end{proof}

Let us now define recursively, what it means for a \td\ to correspond to a factorisation of more than two factors.
A \td\ $(T,\cV)$ of a graph~$G$ \emph{corresponds to} a factorisation $(G_1,\ldots, G_n)$ of~$G$ if the following hold:
\begin{enumerate}[(i)]
\item there is a factorisation $(H_1,\ldots,H_{n-1})$ of~$G$ and a factorisation $(G_i,G_j)$ of some $H_m$ such that
\[
\{G_k\mid 1\leq k\leq n, k\neq i, k\neq j\}=\{H_\ell\mid 1\leq\ell\leq n-1,\ell\neq m\},
\]
\item there is a \td\ $(T',\cV')$ corresponding to the factorisation $(H_1,\ldots,H_{n-1})$ and a \td\ $(T'',\cV'')$ corresponding to the factorisation $(G_i,G_j)$ such that $(T,\cV)$ is a refinement of $(T',\cV')$ where the only non-trivial subtrees in the covering of~$T$ are those that get contracted to nodes whose parts correspond to~$H_m$ and the \td s induced by those trees are isomorphic to $(T'',\cV'')$ in a canonical way.
\end{enumerate}

While in general there need not be a \td\ corresponding to a given factorisation, we will show in Proposition~\ref{prop_tdIsIncomp} how to alter the factorisation slightly to find a \td\ corresponding to that new factorisation.
In order to prove that result, we need the following lemma.

\begin{lem}\label{lem_NiceSplitting}
Let $G$ and $H$ be \qt\ locally finite connected graphs such that $G=H\ast_T H$ is a \ta\ of Type 1 respecting the group actions such that the induced action of $\Aut(G)$ on the connecting tree $T$ is with inversion of the edges.
Then there exists a finite connected graph $K\not\cong H$ such that $G=H\ast K$.

Furthermore, $(H,H)$ satisfies (\ref{itm_facEnds2}) if and only if $(H,K)$ satisfies (\ref{itm_facEnds2}).
\end{lem}

\begin{proof}
Let $S$ be an adhesion set of the \ta\ in some $G_u$ with $u\in V(T)$ and let $S'\sub V(G_u)$ be connected and finite with $S\sub S'$.
Let $\varphi\in\Aut(G)$ such that it reverses the edge $uv\in E(T)$ whose adhesion set is~$S$.
Then we have $S=\varphi(S)$.
If $S'\neq V(G_u)$, let $K$ be the subgraph of~$G$ induced by $\pi(S')\cup \varphi(\pi(S'))$.
If $S'=V(G_u)$, let $K$ be the subgraph of~$G$ induced by $\pi(G_u)$ and $\pi(G_v)$.
In both cases, the graph $K$ is finite and connected and $H$ and $K$ are not isomorphic.
The \ta\ $H\ast K$ is~$G$ with adhesion sets the copies of $S'$ in the first case and of $V(H)$ in the second case.

The additional assertion holds since each of the two factorisations $(H,H)$ and $(H,K)$ satisfies (\ref{itm_facEnds2}) if and only if every process of splittings of~$H$ stops after at most $i-1$ steps.
\end{proof}

\begin{prop}\label{prop_tdIsIncomp}
Let $(G_1,\ldots,G_k)$ be a factorisation of a \qt\ locally finite connected graph~$G$ and let $(T,\cV)$ be an $\Aut(G)$-invariant inversion-free \td\ corresponding to $(G_1,\ldots,G_k)$ with $|\Aut(G)\setminus E(T)|<\infty$.
Assume that $G_i$ has more than one end and let $(H_1,H_2)$ be a factorisation of~$G_i$.
Then there is a factorisation $(H_1',H_2')$ of $G_i$ such that $(H_1,H_2)$ satisfies (\ref{itm_facEnds2}) if $(H'_1,H'_2)$ satisfies (\ref{itm_facEnds2}) and there is an $\Aut(G)$-invariant inversion-free \td\ $(T',\cV')$ with $|\Aut(G)\setminus E(T')|<\infty$ such that $(T',\cV')$ is a refinement of $(T,\cV)$ that corresponds to $(G_1,\ldots,G_{i-1},G_{i+1},\ldots,G_k,H_1',H_2')$.

Furthermore, we may assume that the \td\ of~$G_i$ corresponding to $(H_1,H_2)$ distinguishes the same ends of~$G$ as the one corresponding to $(H'_1,H'_2)$.
\end{prop}

\begin{proof}
Let $t\in V(T)$ such that $G[V_t]$, the graph induced by~$V_t$, corresponds to the factor~$G_i$.
By iterated applications of Lemma~\ref{lem_PartContainsS} and since there are only finitely many $\Aut(G)$-orbits on $E(T)$, we obtain a factorisation $(H_1',H_2')$ of~$G_i$ such that $(H_1,H_2)$ satisfies (\ref{itm_facEnds2}) if $(H'_1,H'_2)$ satisfies (\ref{itm_facEnds2}) and such that for every adhesion set of $(T,\cV)$ in $G[V_t]$ there is some part of the \td\ $(\tilde{T},\tilde{\cV})$ corresponding to $(H_1',H_2')$ that contains its image under the canonical map $G[V_t]\to G_i$.
If the setwise stabiliser $\Gamma$ of~$G_i$ in~$\Aut(G)$ acts on~$\tilde{T}$ without inversion of the edges\footnote{Technically, we would have to use an injective map of the setwise stabiliser of $G[V_t]$ in $\Aut(G)$ to $\Aut(G_i)$.
For the sake of simplicity, we omit this map.}, set $H_2'':= H_2'$.
Otherwise, $H_1'$ is isomorphic to~$H_2'$.
We apply Lemma~\ref{lem_NiceSplitting} to obtain another factorisation $(H_1',H_2'')$ of~$G_k$, where $H_2''$ is finite and $H_1'\not\cong H_2''$ and such that $(H_1',H_2'')$ satisfies (\ref{itm_facEnds2}) if and only if $(H_1', H_2')$ satisfies~(\ref{itm_facEnds2}).
Let $(T^\circ,\cV^\circ)$ be the \td\ that corresponds to $H_1'\ast H_2''$.
Note that $\Gamma$ acts without inversion on~$T^\circ$ and that for every adhesion set of $(T^\circ,\cV^\circ)$ in $G[V_t]$ there is some part of the \td\ $(\tilde{T},\tilde{\cV})$ corresponding to $(H_1',H_2'')$ that contains its image under the canonical map $G[V_t]\to G_i$.

For an adhesion set $S$ of $(T,\cV)$ that lies in~$V_t$, let $T_S$ be the maximal subtree of~$T^\circ$ such that for all $v\in V(T_S)$ the part $V_v^\circ$ contains~$S$.
This is a finite tree since our \ta s are of finite identification and it is non-empty by construction.
Thus, it has either a central vertex or a central edge.
If it is a central vertex, let $v_S$ be this vertex.
If it is a central edge, choose a vertex $v_S$ that is incident with that edge so that for every adhesion set $S'=\alpha(S)$ with $\alpha\in\Aut(G)$ we have $v_{S'}=\alpha(v_S)$.
This is possible since $\Gamma$ acts on~$T^\circ$ without inversion.
Let $(T',\cV')$ be the \td\ that is a refinement of~$(T,\cV)$ where only the trees for vertices in the $\Aut(G)$-orbits of~$t$ are non-trivial and for these we take the \td\ $(T^\circ,\cV^\circ)$ and its $\Aut(G)$-images.

Note that $\Aut(G)$ acts on~$\tilde{T}$ without inversion since this is true for the action of $\Aut(G)$ on~$T$ and of~$\Gamma$ on~$T^\circ$.

For the additional statement, note that the ends contained in a single part of the first \td\ of $G_i$ are precisely the ends contained in some part of the second \td\ since we just enlarged the parts finitely many times by finite vertex sets in the above construction.
\end{proof}

\begin{remark}\label{rem_taType1}
If $(G_1,G_2)$ is a factorisation of a \qt\ locally finite connected graph~$G$ such that the \td\ corresponding to that \ta\ has only one orbit on the vertex set of the tree and the \ta\ is of Type~1, then Proposition~\ref{prop_tdIsIncomp} implies that we find a different factorisation $(G_1',G_2')$ such that the \ta\ is of Type~1 but $\Aut(G)$ does not act transitively on the tree of the corresponding \td.
\end{remark}

Let $(T,\cV)$ and $(T',\cV')$ be two \td s of~$G$ that correspond to two factorisations of~$G$ and assume that the factorisation for $(T,\cV)$ is terminal.
If there is a part $V_t$ of $(T',\cV')$ that contains more than one end, then there exists a separation $(A,B)$ that is induced by $(T,\cV)$ and that distinguishes those ends, since that \td\ corresponds to a terminal factorisation.
This separation $(A,B)$ induces a separation $(A',B'):=(A\cap V_t,B\cap V_t)$ of $G[V_t]$.
Since $(T,\cV)$ is an $\Aut(G)$-invariant \td, $(A',B')$ is nested with all its images under the stabiliser of $V_t$ in $\Aut(G)$.
Thus, the set of these images induces a \td\ $(T_t,\cV_t)$ by \cite[Theorem 3.2]{cantreet.d} and this \td\ distinguishes some ends of~$G[V_t]$, namely precisely those that are distinguished by $(A,B)$ and lie in $G[V_t]$.

We want to introduce two types of refinements, namely one-step and full refinements.
Following Section 4 in~\cite{HLMR}, in particular Proposition 4.1 and Corollary 4.3, we obtain an $\Aut(G)$-invariant \td\ $(T'_t,\cV'_t)$ with a unique $\Aut(G)$-orbit on $E(T'_t)$ and all adhesion sets finite that distinguishes the same ends as $(T_t,\cV_t)$.\footnote{Such a \td\ is called \emph{splitting} in~\cite{HLMR}.}
Then Lemma 5.9 and Theorem 5.10 in~\cite{HLMR} with their proofs imply that there is a factorisation $(G_1,G_2)$ of $G[V_t]$ that corresponds to $(T'_t,\cV'_t)$.
Proposition~\ref{prop_tdIsIncomp} then implies the existence of a factorisation $(G'_1,G'_2)$ of $G[V_t]$ such that the corresponding \td\ is a refinement of $(T',\cV')$ and such that the ends distinguished by that \td\ which lie in $V_t$ are precisely those that are distinguished by $(T'_t,\cV'_t)$.
We call the resulting \td\ a \emph{one-step refinement of $(T',\cV')$ by $(T,\cV)$}.

Now if we recursively do one-step refinements by $(T,\cV)$ starting with $(T',\cV')$ and first do one-step refinements originating from one $\Aut(G)$-orbit of separations induced by $(T,\cV)$, then move on to the next orbit and so on, we must stop after finitely many steps since we only have finitely many $\Aut(G)$-orbits of edges of~$T$ and the separator of each separation induced by one edge only meets finitely many parts and each part only finitely many times.
(Note that the number of times that it meets parts and induces a separation that distinguishes ends of that part may grow in previous steps but not while considering that separation.)
Once the recursion stops, we call the resulting \td\ a \emph{full refinement of $(T',\cV')$ by $(T,\cV)$}.
By our discussion, we directly obtain the following result.

\begin{prop}\label{prop_fullRefine}
Let $(T,\cV)$ and $(T',\cV')$ be two \td s of~$G$ that correspond to two factorisations of an accessible \qt\ locally finite connected graph~$G$ and assume that the factorisation for $(T,\cV)$ is terminal.
Then there exists a full refinement of $(T',\cV')$ by $(T,\cV)$ and a terminal factorisation of~$G$ to which that refinement corresponds.\qed
\end{prop}

Let $\Gamma$ be a group acting on a tree~$T$.
An edge $e = uv \in E(T)$ is \emph{$\Gamma$-compressible} if $\Gamma u \neq \Gamma v$ and either $\Gamma_v = \Gamma_e$ or
$\Gamma_u = \Gamma_e$.

Set $T_0 := T$.
For $i \geq 1$, let $E_i$ be an orbit of $\Gamma$-compressible edges of $T_{i-1}$ and let $T_i$ be the tree obtained from $T_{i-1}$ by contracting $E_i$.
If $\Gamma \setminus E(T)$ is finite, then there is some $i \geq 0$ such that $T_i$ has no $\Gamma$-compressible edge.
Set $\cC(T) := T_i$ and let $c\colon V(T) \to V(\cC(T))$ be the canonical map defined by all contractions, i.\,e.\ a vertex is mapped to the vertex it ends up as after doing all contractions. 
Note that, in general, $\cC(T)$ is not uniquely defined but relies on the choices of the edge sets $E_i$.
If $(T, \cV)$ is a $\Gamma$-invariant \td\ of a graph~$G$, then the pair
\[
\left(\cC(T), \cV^{\cC(T)} := \left\{\bigcup\{V_s \in \cV \mid c(s) = t\} \mid t \in V(\cC(T))\right\}\right)
\]
is also a $\Gamma$-invariant \td\ of~$G$ and we denote it by $\cC(T,\cV)$.
There is a canonical factorisation of~$G$ associated to that \td: the graphs induced by the parts, one for each orbit, form a factorisation of~$G$ whose corresponding \td\ in $\cC(T,\cV)$.

\begin{prop}\label{prop_BoundVertexOrbits}
Let $(T,\cV)$ and $(T',\cV')$ be two \td s obtained from two terminal factorisations of an accessible \qt\ \lf\ connected graph~$G$.
If the \td s have no $\Aut(G)$-compressible edges and if $\Aut(G)$ acts without inversion on $T$ and $T'$, then
\[
|V(T)/\Aut(G)|=|V(T')/\Aut(G)|.
\]
\end{prop}

\begin{proof}
Let $t\in V(T)$.
If $V_t$ is infinite, then it contains a unique thick end.
Thus, there is a unique $t'\in V(T')$ that contains this thick end.
Note that the stabiliser of $t$ in $\Aut(G)$ fixes the thick end and thus also fixes $t'$.
We set $\varphi(t):=t'$.
Let us assume that $V_t$ is finite.
Since the \ta s have finite identification, there exists a minimal finite subtree $T_t$ of $T'$ such that all $t'\in V(T')$ with $V_{t'}\cap V_t\neq\es$ lie in $T_t$.
This finite tree has a central vertex or central edge, which must be fixed by the stabiliser $\Gamma$ of~$t$ in $\Aut(G)$.
Since the action of $\Aut(G)$ on the tree is without inversion, $\Gamma$ fixes both incident vertices of that central edge in the second case.
Thus, $\Gamma$ fixes a vertex $t'$ of~$T_t$ in both cases.
We set $\varphi(t):=t'$ and for $\alpha\in \Aut(G)$, we set $\varphi(\alpha(t)):=\alpha(t')$.
That way, we have defined a map $\varphi\colon V(T)\to V(T')$.
Analogously, we define a map $\psi\colon V(T')\to V(T)$.
Note that vertices in the same $\Aut(G)$-orbit are mapped to vertices in the same $\Aut(G)$-orbit.

Let $t\in V(T)$ and let us assume that $t\neq \psi(\varphi(t))$.
Then there exists a unique non-trivial $t$--$\psi(\varphi(t))$ path~$P$ in~$T$.
By the definition of~$\varphi$ and $\psi$, we obtain that the stabiliser of~$t$ in~$\Aut(G)$ also stabilises $\psi(\varphi(t))$.
Thus, it must stabilise all vertices on~$P$ and since the action on $T$ is without inversion, the stabilisers of~$t$ and of the edge $st$ on~$P$ coincide.
Since $(T,\cV)$ has no $\Aut(G)$-compressible edge, the vertices $s$ and $t$ must lie in the same orbit.
Inductively, all vertices on~$P$ lie in the same orbit as~$t$.
Thus, $\varphi$ and $\psi$ are inverse to each other when lifted to maps between the $\Aut(G)$-orbits.
This proves the assertion.
\end{proof}

The following lemma follows directly from the definition of the types of \ta s.

\begin{lem}\label{lem_OrbitsForTypesTA}
Let $(G_1,\ldots,G_n)$ be a factorisation of~$G$ and let $(H_1,H_2)$ be a factorisation of $G_1$.
Let $(T,\cV)$ and $(T',\cV')$ be \td s corresponding to the factorisations $(G_1,\ldots,G_n)$ and $(H_1,H_2,G_2,\ldots,G_n)$, respectively.
\begin{enumerate}[\rm (i)]
\item If $G_1$ is a \ta\ of Type~$1$ of $H_1$ and~$H_2$, then we have
\[
|V(T')/\Aut(G)|=|V(T)/\Aut(G)|+1
\]
and
\[
|E(T')/\Aut(G)|=|E(T)/\Aut(G)|+1.
\]
\item If $G_1$ is a \ta\ of Type~$2$ of $H_1$ and~$H_2$, so in particular $H_1=H_2$, then we have
\[
|V(T')/\Aut(G)|=|V(T)/\Aut(G)|
\]
and
\[
\pushQED{\qed}
|E(T')/\Aut(G)|=|E(T)/\Aut(G)|+1.\qedhere
\popQED
\]
\end{enumerate}
\end{lem}

A separation $(A,B)$ of~$G$ satisfies ($\ddagger$) if every vertex $x\in A\cap B$ satisfies one of the following statements:
\begin{enumerate}[($\ddagger$1)]
\item $x$ has no neighbours in $A\sm B$ and $B\sm A$;
\item $x$ has neighbours in $A\sm B$ and in $B\sm A$;
\item $x$ has neighbours in either $A\sm B$ or $B\sm A$ but not both and there exists $\varphi\in\Aut(G)$ such that $\varphi((A,B))=(B,A)$ and $\varphi(x)$ is adjacent to $x$.
\end{enumerate}

We say that a separation $(A,B)$ of finite order in a graph $G$ \emph{distinguishes} ends $\omega$ and $\eta$ if the rays in~$\omega$ lie in~$A$ eventually and the rays in~$\eta$ lie in~$B$ eventually or vice versa.

\begin{prop}\label{prop_tighten}
Let $G$ be a \qt\ \lf\ connected graph.
Let $\cN$ be a nested $\Aut(G)$-invariant set of separations of finite order consisting of only finitely many $\Aut(G)$-orbits.
Then there exists a nested $\Aut(G)$-invariant set $\cM$ of separations of finite order satisfying {\rm ($\ddagger$)} and a surjective map $\alpha\colon \cN\to\cM$ such that each element of~$\cN$ distinguishes the same ends as its image in~$\cM$.
\end{prop}

\begin{proof}
Note that every vertex lies in only finitely many separators of separations in~$\cN$ since there are only finitely many orbits in~$\cN$.
Let us assume that there is a separation $(A,B)$ in~$\cN$ that does not satisfy ($\ddagger$).
By exchanging $(A,B)$ by $(B,A)$, if necessary, we may assume that there exists $a\in A\cap B$ with a neighbour in $A\sm B$ but with no neighbour in $B\sm A$ and that there exists no $\varphi\in\Aut(G)$ with $\varphi((A,B))=(B,A)$ and such that $a$ and $\varphi(a)$ are adjacent.
Let us show that we may assume that there is no $(A',B')>(A,B)$ such that $a\in A'\cap B'$ and $a$ adjacent to $A'\sm B'$ but not to $B'\sm A'$.
Indeed, if such a maximal $(A',B')$ exists and we could not replace $(A,B)$ by $(A',B')$, then $a$ must be adjacent to $\varphi(a)$ for some $\varphi\in\Aut(G)$ with $\varphi((A',B'))=(B',A')$, but then $\varphi(a)$ has neighbours in
\[
\varphi(A)\sm \varphi(B)\sub \varphi(A')\sm \varphi(B')= B'\sm A'\sub B\sm A
\]
but not in $\varphi(B)\sm \varphi(A)$.
In particular, we have $a\in\varphi(A)\cap\varphi(B)$.
This contradicts the maximality of $(A',B')$ since 
\[
(A',B')=(\varphi(B'),\varphi(A'))<(\varphi(B),\varphi(A)).
\]

We set
\begin{align*}
\alpha_1((A,B)):=(&A\sm\{\varphi(a)\mid \varphi\in\Aut(G),\varphi((A,B))=(B,A)\},\\
&B\sm\{\varphi(a)\mid \varphi\in\Aut(G),\varphi((A,B))=(A,B)\}),
\end{align*}
\begin{align*}
\alpha_1((B,A)):=(&B\sm\{\varphi(a)\mid \varphi\in\Aut(G),\varphi((A,B))=(A,B)\},\\
&A\sm\{\varphi(a)\mid \varphi\in\Aut(G),\varphi((A,B))=(B,A)\})
\end{align*}
and
\[
\alpha_1(\varphi((A,B))):=\varphi(\alpha_1((A,B))),
\]
\[
\alpha_1(\varphi((B,A))):=\varphi(\alpha_1((B,A)))
\]
for all $\varphi\in\Aut(G)$.
For all elements of $\cN$ that do not lie in a common $\Aut(G)$-orbit with $(A,B)$, we let $\alpha_1$ be the identity on them.
We shall prove the following.
\begin{txteq}\label{eq_MakeAlmostTight}
The set $\alpha_1(\cN)$ is nested and $\Aut(G)$-invariant and, for every $(C,D)\in\cN$, the separations $(C,D)$ and $\alpha_1((C,D))$ distinguish the same ends.
\end{txteq}
Once we have proved~(\ref{eq_MakeAlmostTight}), we apply recursion and obtains new maps $\alpha_2,\ldots, \alpha_k$ for some $k\in\N$ till for no separation $(C,D)$ in the resulting set $\alpha_k\circ\cdots\circ \alpha_1(\cN)$ of separations its separator contains a vertex $x$ that does not satisfy ($\ddagger$1)--($\ddagger$3).
Then all separations in the resulting set $\cM$ of separations satisfy ($\ddagger$).
It follows from~(\ref{eq_MakeAlmostTight}) that all other claimed properties hold for $\cM$ and $\alpha:=\alpha_k\circ \cdots\circ \alpha_1$.
Thus, it suffices to prove~(\ref{eq_MakeAlmostTight}).

By construction, the set $\alpha_1(\cN)$ is $\Aut(G)$-invariant.
Since $a$ is not adjacent to any $\varphi(a)$ with $\varphi\in\Aut(G)$ such that $\varphi((A,B))=(B,A)$, the pair $\alpha_1((C,D))$ is a separation for every $(C,D)\in\cN$.
Obviously, all separations $(C,D)$ with $\alpha_1((C,D))=(C,D)$ distinguish the same ends as their images under~$\alpha_1$.
By symmetry, it suffices to prove that $(A,B)$ and $(A',B'):=\alpha_1((A,B))$ distinguish the same ends.
So let $\omega,\eta$ be two ends that are distinguished by $(A,B)$.
Then one of them has only rays that lie in $A\sm B$, and thus in $A'\sm B'$, eventually, and the other one has only rays that lie in $B\sm A$, and thus in $B'\sm A'$, eventually.
This implies that $\omega$ and $\eta$ are distinguished by $(A',B')$, too.
By a symmetric argument, all ends that are distinguished by $(A',B')$ are distinguished by $(A,B)$, too.
Thus, $(A,B)$ and $(A',B')$ distinguish the same ends.

Let $(C,D),(E,F)\in\cN$ and let $(C',D'):=\alpha_1((C,D))$ and $(E',F'):=\alpha_1((E,F))$.
Assume that $(C,D)<(E,F)$.
Let us first suppose that $F'\not\sub D'$.
Since $F'\sub F\sub D$, there exists a vertex in $D\sm D'$ and hence there exists $\varphi\in\Aut(G)$ such that $C=\varphi(A)$ and $D=\varphi(B)$.
We have
\[
F'\sub D'\cup\{\psi(a)\mid \psi\in\Aut(G),\psi((A,B))=(C,D)\}.
\]
By the maximal choice of $(A,B)$, there exists a neighbour of $\varphi(a)$ in $F\sm E\sub D\sm C$ which contradicts the choice of~$a$.
Thus, we have $F'\sub D'$.

Let us now suppose $C'\not\sub E'$.
Then there exists $\varphi\in\Aut(G)$ with $\varphi(A)=F$, $\varphi(B)=E$ and $\varphi(a)\in C'$.
Since $\varphi(a)$ must have a neighbour in $F\sm E\sub D\sm C$, it must lie in~$D'$.
Thus, we have $\varphi(a)\in C'\cap D'$.
By the maximality of $(A,B)$, either $\varphi(a)$ has a neighbour in $C\sm D\sub E\sm F$ or it has no neighbour in $D\sm C\sub F\sm E$.
Both cases contradict the choice of~$\varphi(a)$.
This contradiction shows $C'\sub E'$ and hence we have $(C',D')\leq(E',F')$.
Thus, $\cN$ is nested, which implies~(\ref{eq_MakeAlmostTight}) and finishes the proof as discussed above.
\end{proof}

\begin{prop}\label{prop_BoundTATyp2}
There exists $N\in\N$ such that, for every \td\ $(T,\cV)$ of a \qt\ \lf\ connected graph~$G$ that corresponds to a factorisation of~$G$, we have
\[
|E(T)/\Aut(G)|\leq|V(T)/\Aut(G)|+N.
\]
\end{prop}

\begin{proof}
By Lemma~\ref{lem_OrbitsForTypesTA}, it suffices to show that there exists $N\in\N$ such that among the \ta s involved in a process of splittings that leads to a factorisation of~$G$ there are at most $N$ \ta s of Type~$2$.

By contracting edges from the \td\ that belong to \ta s of Type~$1$, joining the parts whose tree nodes are incident with the edges and adjusting the factorisation accordingly, we may assume that the \td\ corresponds to a factorisation that uses only \ta s of Type~$2$.

Let $(T,\cV)$ and $(T',\cV')$ be two \td s corresponding to factorisations that are obtained by processes of splittings that use only \ta s of Type~$2$ and such that the latter is a refinement of the first with one additional factorisation of Type~$2$.
It follows from Lemma~\ref{lem_OrbitsForTypesTA} that each of $V(T)$ and $V(T')$ consists of a unique $\Aut(G)$-orbit.

If an adhesion set of $(T,\cV)$ covers a part of $(T',\cV')$, then this part is finite and thus will not be splitted any further.
Since $V(T')$ consists of a single $\Aut(G)$-orbit, this happens only once and then the factorisation corresponding to that \td\ is terminal.

Let us now assume that no adhesion set of $(T,\cV)$ covers any part of $(T',\cV')$.
Note that also no adhesion set of $(T',\cV')$ covers any part of $(T',\cV')$.
We now apply Proposition~\ref{prop_tighten} to the set of separations induced by the \td\ $(T',\cV')$.
Let $\cM$ be the resulting set of separations and let $(T'',\cV'')$ be the \td\ corresponding to~$\cM$, cp.\ \cite[Lemma 2.7]{EKT2022}.
Then we obtain that for every adhesion set $S$ corresponding to an edge $e_S$ in~$T''$ there is an edge $e=xy\in E(G)$ inside a part $V''_t$ such that $t\in e_S$ and such that $y\in S$ and $x\in V''_t\sm S$.
By the choice of~$(T'',\cV'')$, there exists either an edge $f=yz$ or a path $P=yy'z$ of length two such that in both cases $z$ is separated from~$x$ by~$S$ and moreover lies in a part $V''_{t'}$ for $t'\in V(T'')$ such that $e_S$ separates $t$ and $t'$: the latter case follows from the case ($\ddagger$3) in the definition of~($\ddagger$).
In particular those end vertices do not lie in~$S$.
Note that in the first case $xyz$ is an induced path of length~$2$ and in the second case $xyy'z$ is an induced path of length~$3$.
Let us denote the resulting path of length $2$ or~$3$ by~$Q$.
We say that the pair $(x,z)$ is \emph{defined by}~$e_S$.

As there are only finitely many orbits of pairs $(u,v)$ of vertices of distance $2$ or~$3$, it suffices to show that no two adhesion sets define the same pair of vertices.
So let us suppose that there were two distinct adhesion sets defining the same pair $(u,v)$.
Since both separations induced by the edges corresponding to the adhesion sets separate $u$ from~$v$, their nestedness implies that they must lie on a path in~$T''$ between the two subtrees each of whose nodes contains $x$ or~$y$, respectively.
This contradicts the choice of $(u,v)$ as there is a unique edge between those two subtrees that is incident with the subtree whose nodes contain~$u$.

This shows that the number of $\Aut(G)$-orbits of $E(T'')$, and thus of $E(T')$ is bounded by the sum of $1$ and the number of $\Aut(G)$-orbits of pairs of vertices of distance $2$ or~$3$.
\end{proof}

\begin{remark}
Proposition~\ref{prop_BoundTATyp2} implies for inaccessible \qt\ \lf\ connected graphs that all but finitely many splittings involved in any splitting process correspond to \ta s of Type~$1$.
\end{remark}

Now, we are ready to prove the main result of this section.

\begin{thm}\label{thm_iterated}
Let $G$ be a \qt\ \lf\ connected graph. If some process of splittings stops after finitely many steps, then there exists $n\in\N$ such that every process of splittings stops after at most $n$ steps.
\end{thm}

\begin{proof}
Let $(G_1, \ldots , G_n)$ be a terminal factorisation of $G$ that is the result of a process of splittings and let $(T, \cV)$ be the tree-decomposition corresponding to that factorisation.
Let us suppose that there exists an infinite process of splittings.
By Proposition~\ref{prop_tdIsIncomp}, we may assume that the splitting in each step of the infinite process was done so that the factorisation $(G_{i,1}, \ldots , G_{i,n_i})$ of~$G$ obtained in the $i$-th step of that process gives rise to a corresponding \td $(T_i,\cV_i)$ such that $(T_{i+1}, \cV_{i+1})$ is a refinement of $(T_i, \cV_i)$ as in that proposition.
By Proposition~\ref{prop_tdIsIncomp} and Remark~\ref{rem_taType1}, we may assume that $\Aut(G)$ acts on all trees $T$ and $T_i$ without inversion, since it does so on the trivial tree on one vertex, and that we add in each step at least one new $\Aut(G)$-orbit of vertices, if it came from a \ta\ of Type~1.
By construction, $E(T_i)/\Aut(G)$ is finite.
For every $i\in\N$, let $(T_i',\cV_i')$ be a full refinement of $(T_i,\cV_i)$ by $(T,\cV)$ and set $(T_i'',\cV_i''):=\cC(T_i',\cV_i')$.
Let $(G_{i,1}', \ldots , G_{i,n_i}')$ be the terminal factorisation to which $(T_i',\cV_i')$ corresponds by Proposition~\ref{prop_fullRefine} and let $(G_{i,1}'', \ldots , G_{i,n_i}'')$ be the terminal factorisation to which $(T_i'',\cV_i'')$ corresponds.

Let us look at the construction of $\cC(T)$.
We claim the following.
\begin{txteq}\label{eq_iterated_1}
In each step $j$ of the construction of $\cC(T)$ the graph $H$ induced by the set $E_j$ of edges that get contracted is a disjoint union of stars each of which contains at most one node $t$ whose part in the corresponding tree-decomposition is infinite.
\end{txteq}
Let $uv$ be an $\Aut(G)$-compressible edge in some step. We may assume that the stabiliser of $uv$ is the stabiliser of~$u$.
Let $w$ be a neighbour of~$u$ such that $uv$ and $uw$ lie in the same $\Aut(G)$-orbit, i.\,e.\ there is some $\varphi\in \Aut(G)$ with $\varphi(uv) = uw$.
By definition of compressible edges, we have $\varphi(u) = u$, so $\varphi$ lies in $\Aut(G)_u = \Aut(G)_{uv}$.
Thus, $\varphi$ fixes $v$ and hence $u$ has degree $1$ in~$H$.
Since the stabiliser of a leaf acts quasi-transitively on the part of that leaf, an infinite part would imply the existence of a second adhesion set in that part that lies in the same orbit as the first one.
But then the stabiliser of the leaf cannot stabilise the incident edge, which contradicts compressibility.
Thus, all parts of leaves of the star are finite.
So at most the part of one node of the star, its center, is infinite.
This proves (\ref{eq_iterated_1}).

Let us show
\begin{equation}\label{eq_iterated_2}
|V(T_i')/\Aut(G)|\leq |V(T_i'')/\Aut(G)|+\sum_{v\in Y}(d_Y(v)-1),
\end{equation}
where $Y$ is a set of representatives of the $\Aut(G)$-orbits on $V(T_i'')$ and $d_Y(v)$ denotes the number of $\Aut(G)$-orbits of $E(T_i'')$ that contain edges that are incident with~$v$.

We note that due to~(\ref{eq_iterated_1}), the quantity $\sum_{v\in X}(d_X(v)-1)$ strictly increases, where $X$ is a set of representatives of the $\Aut(G)$-orbits on the vertices of the tree of the $j$-th step of the construction of $V(T_i'')$, while the number of $\Aut(G)$-orbits is decreased by exactly one in each step.
This shows~(\ref{eq_iterated_2}).

Thus, we obtain
\begin{align*}
|V(T_i)/\Aut(G)|&\leq |V(T_i')/\Aut(G)|\\
&\leq |V(T_i'')/\Aut(G)|+\sum _{v\in Y}(d_Y(v)-1)\\
&\leq \sum_{v\in Y} d_Y(v)\\
&\leq 2|E(T_i'')/\Aut(G)|,
\end{align*}
where $Y$ is a set of representative of the $\Aut(G)$-orbits in $V(T_i'')$.
Note that the first inequality holds by construction of $(T_i',\cV_i')$.

According to Proposition~\ref{prop_BoundTATyp2}, there exists $N\in\N$ depending only on~$G$ such that
\[
|E(T_i'')/\Aut(G)|\leq|V(T_i'')/\Aut(G)|+N.
\]
Thus, we obtain
\[
|V(T_i)/\Aut(G)|\leq 2|V(T_i'')/\Aut(G)|+2N.
\]

Note that we obtain the same inequality for $(T,\cV)$ and $\cC(T,\cV)$.
Then Proposition~\ref{prop_BoundVertexOrbits} implies
\[
|V(\cC(T))/\Aut(G)|=|V(T_i'')/\Aut(G)|.
\]
Thus, $|V(T_i)/\Aut(G)|$ is bounded in terms of $|V(\cC(T))/\Aut(G)|$, which implies that the number of steps of the splitting process before it stops is bounded in terms of $|V(\cC(T))/\Aut(G)|$.
\end{proof}

\section{The main theorem}\label{sec_pf}

In this section, we are going to prove our main result, Theorem~\ref{mainthm}.
Theorem~\ref{mainthmShort} follows immediately from Theorem~\ref{mainthm}.

\begin{thm}\label{mainthm}
Let $G$ be a \qt\ locally finite connected graph. 
Then the following statements are equivalent.
\begin{enumerate}[\rm (i)]
\item\label{itm_main_Acc} $G$ is accessible.
\item\label{itm_main_TWAcc} $G$ is accessible in the sense of Thomassen and Woess.
\item\label{itm_main_FinGenAlg} $\cS(G)$ is an $\Aut(G)$-finitely generated semiring.
\item\label{itm_main_S=Sn} There is an $n\in \N$ such that $\cS_n(G)=\cS(G)$.
\item\label{itm_main_ProcStop} Every process of splittings of~$G$ must end after finitely many steps.
\item\label{itm_main_Process2} There exists $\kappa(G)\in\N$ such that every process of splittings of~$G$ stops after $\kappa(G)$ steps.
\end{enumerate}	
\end{thm}

\begin{proof}
The equivalence of~(\ref{itm_main_Acc}) and~(\ref{itm_main_TWAcc}) is~\cite[Theorem 6.3]{HLMR}.
The implication (\ref{itm_main_TWAcc}) to~(\ref{itm_main_S=Sn}) holds by Proposition~\ref{prop_TWAccToS=Sn}.
Theorem~\ref{thm_iterated} implies the implication (\ref{itm_main_Acc}) to (\ref{itm_main_Process2}).
The implications (\ref{itm_main_Process2}) to (\ref{itm_main_ProcStop}) and (\ref{itm_main_ProcStop}) to (\ref{itm_main_Acc}) are trivial.

To prove (\ref{itm_main_S=Sn}) to (\ref{itm_main_TWAcc}), let $n\in\N$ such that $\s_n(G)=\s(G)$.
Let $\omega_1$ and $\omega_2$ be ends of~$G$.
We are going to show that there is a separation of order at most~$n$ distinguishing $\omega_1$ and $\omega_2$.
Let $(A,B)$ be a separation of finite order separating $\omega_1$ and $\omega_2$.
Since $\s_n(G)=\s(G)$, there are separations $(A_1,B_1),\ldots,(A_m,B_m)$ of order at most~$n$ such that $(A,B)$ is generated by these separations.
Note that neither the sum nor the product of two separations distinguishes two ends if none of the summands or factors does so.
Thus, there is some $i\in\{1,\ldots,m\}$ such that $(A_i,B_i)$ distinguishes $\omega_1$ and $\omega_2$.
Hence, (\ref{itm_main_TWAcc}) holds.

It remains to prove the equivalence of (\ref{itm_main_FinGenAlg}) and (\ref{itm_main_S=Sn}).
If (\ref{itm_main_FinGenAlg}) holds, let $\Xcal$ be an $\Aut(G)$-invariant generating set that consists of finitely many orbits.
Let $n$ be the maximum order of separations in~$\Xcal$.
Then $\s(G)=\s_n(G)$ by Proposition~\ref{prop_sepGenByTight} and (\ref{itm_main_S=Sn}) holds.

Assume that there is some $n\in\N$ such that $\s(G)=\s_n(G)$.
By Proposition~\ref{prop_corTW} and as $G$ is \qt, there are only finitely many orbits of tight separations of order at most~$n$.
Thus, $\s_n(G)$ and hence $\s(G)$ is $\Aut(G)$-finitely generated.
\end{proof}

\section*{Acknowledgement}

We thank the anonymous referee for pointing out a major gap in an earlier version of this paper.

\end{document}